\newtheorem{theorem} {Theorem}[section]
\newtheorem{corollary}[theorem] {Corollary}
\newtheorem{proposition} [theorem]{Proposition}
\font\rt=cmss9.360pk
\font\sd=cmcsc9.360pk
\begin{document}
~\vspace{-16mm}

\newcommand{\R}{\mathbb{R}}
\newcommand{\Z}{\mathbb{Z}}
\newcommand{\cH}{\mathcal{H}}

\newcommand{\norm}[1]{\left\|#1\right\|}
\newcommand{\hkd}[1]{\left\langle#1\right\rangle}
\newcommand{\krk}[1]{\left\{#1\right\}}
\newcommand{\krb}[1]{\left(#1\right)}
\newcommand{\abs}[1]{\left|#1\right|}
\newcommand{\ol}[1]{\overline{#1}}
\newcommand{\rref}[1]{[\ref{#1}]}

\oddsidemargin 16.5truemm
\evensidemargin 16.5truemm

\thispagestyle{plain}


\vspace{-0.25cc}

\vspace{1.2cc}

\vspace{1.5cc}

\begin{center}
{\Large\bf Centralizer Algebras of Two Permutation Groups of Order 1344 \\ 
\vspace{1.5cc}
{\large\sc M. Kosuda$^{1}$, M. Oura$^{2}$, Sarbaini$^{3}$}\\

\rule{0mm}{6mm}\renewcommand{\thefootnote}{}\footnotetext{\scriptsize
{\it 2020 Mathematics Subject Classification}: 20C05 and 20B35 
\\
{\it Key words and Phrases}: Centralizer Algebra, Permutation Group, Representation Theory, Group Theory, Group Algebra 
}
}

\vspace{1.5cc}

\parbox{24cc}{{\Small{\bf Abstract.}
There are two permutation groups that they share the same character table of order $1344$. We take up natural representations on 8 and 14 letters respectively. The purpose of this paper is to examine the semi-simple structure of centralizing  algebras in the tensor representation.
}}
\end{center}
\vspace{0.25cc}

\section{INTRODUCTION} 
Let $H_1$ be a complex reflection group of order 96, which is  No. 8 in \cite{shephard}.  It is known that invariant  algebra of $H_1$ is  isomorphic to the sub algebra of modular forms for $SL_2 (\mathbb{Z})$ using theta functions \cite{broue}. Also the invariant  algebra of $H_1$ is a closely related to the  algebra of weight enumerators of self-dual and doubly even codes \cite{gleason}. In \cite{kosuda}, we took up this important group $H_1$ and the centralizer  algebras of the tensor representation of $H_1$ were determined. Additionally, in \cite{imamura}, the multi-matrix structures of the centralizer algebras of the tensor representations of a certain permutation group are discussed.

We continue this line. We take up two permutation groups of order $1344$ which have the same character table (\textit{cf}.  \cite{yoshiara}). Our groups in question are subgroups of the symmetric group of degree $8$ and $14$ . The purpose of this note is to investigate the centralizer algebras of tensor representations of their permutation representations.

As usual, let $\mathbb{C}$ denote the complex number field. We denote by $M_d$
the matrix algebra of degree $d$ over $\mathbb{C}$. For simplicity, let $nM_d$ denote $\underbrace{M_d\oplus \cdots \oplus M_d}_{\text{$n$}}$.

\section{PRELIMINARIES}
Schur-Weyl's reciprocity is one the effective methods to find the structure of the centralizer algebra of representation  $V$ of an associative algebra. Suppose that a representation $(\rho,V)$ of an associative algebra $\mathcal{A}$ is decomposed into the irreducible ones $V_i$ as follows:
\begin{equation*}
	V\cong \bigoplus_i^s m_iV_i
\end{equation*}
Here $m_i$ is the multiplicity of the simple components $V_i$ and $s$ is the number of the essential irreducible representations. The $End_A(V)$, the centralizer algebra of $A$ with respect to the representation $V$ is isomorphic to a direct sum of the endomorphism  algebras $\mathbb{C}^{m_i}$.
\begin{align*}
	End_A(V)\cong\bigoplus_i^s End_{\mathbb{C}}(\mathbb{C}^{m_i})\cong\bigoplus_i^s Mat_{m_i}(\mathbb{C})
\end{align*}
Thus to find the structure of the centralizer  algebra.
Let $G$ be a subgroup of symmetric 8, generated by 
\begin{equation*}
(5,7)(6,8),(2,3,5)(4,7,6),(1,2)(3,4)(5,6)(7,8))(1,5)(2,6)(3,7)(4,8)
\end{equation*}
On the other hand $H$ be a subgroup of symmetric 14, generated by 
\begin{align*}
(1,2,3,4,5,6)(14,13,12,11,10,9,8),(1,4,7,9,14,11,8,6)(2,5,13,10)
\end{align*}
Both $G$ and $H$ are of
order 1344. Let 
$\textbf{X}$ be the character table and
$\chi_G,\chi_H $ a permutation character. We follow the character table of \cite{yoshiara},  but switch the 7th and 8th columns. The group  $G$ has the following 11 conjugacy classes.
\begin{center}
	\begin{tabular}{l l l}
		\hline
		Class & Size & Representative\\
		\hline
		$\mathfrak{C}_1$ & $1$ & $1$\\
		$\mathfrak{C}_2$ & $7$ & $(1,2)(3,4)(5,6)(7,8)$\\
		$\mathfrak{C}_3$ & $42$ & $(1,5)(3,7)$\\
		$\mathfrak{C}_4$ & $42$ & $(1,3)(2,8)(4,6)(5,7)$\\
		$\mathfrak{C}_5$ & $84$ & $(1,5,2)(3,8,7)$\\
		$\mathfrak{C}_6$ & $168$ & $(1,2,5,6)(3,4,7,8)$\\
		$\mathfrak{C}_7$ & $168$ & $(1,5)(2,4,6,8)$\\
		$\mathfrak{C}_8$ & $224$ & $(1,2,7,4)(3,8,5,6)$\\
		$\mathfrak{C}_9$ & $224$ & $(1,7)(2,3,6,8,5,4)$\\
		$\mathfrak{C}_{10}$ & $192$ & $(2,7,4,8,6,5,3)$\\
		$\mathfrak{C}_{11}$ & $192$ & $(2,8,3,4,5,7,6)$\\
		\hline
	\end{tabular}
\end{center}
Also the group $H$ has the following 11 conjugacy classes.
\begin{center}
	\begin{tabular}{l l l}
		\hline
		Class & Size & Representative\\
		\hline
		$\mathfrak{C}'_1$ & $1$ & $1$\\
		$\mathfrak{C}'_2$ & $7$ & $(1, 14)(4, 11)(6, 9)(7, 8)$\\
		$\mathfrak{C}'_3$ & $42$ & $(1, 7, 14, 8)(4, 6, 11, 9)$\\
		$\mathfrak{C}'_4$ & $42$ & $(1, 7, 14, 8)(2, 13)(4, 9, 11, 6)(5, 10)$\\
		$\mathfrak{C}'_5$ & $84$ & $ (1, 10, 8)(3, 6, 11)(4, 12, 9)(5, 7, 14)$\\
		$\mathfrak{C}'_6$ & $168$ & $ (1, 7)(3, 12)(4, 6)(5, 10)(8, 14)(9, 11)$\\
		$\mathfrak{C}'_7$ & $168$ & $(1, 4, 7, 6, 14, 11, 8, 9)(2, 5)(3, 12)(10, 13)$\\
		$\mathfrak{C}'_8$ & $224$ & $ (1, 4, 7, 9, 14, 11, 8, 6)(2, 5, 13, 10)$\\
		$\mathfrak{C}'_9$ & $224$ & $(1, 5, 8, 14, 10, 7)(2, 13)(3, 6, 11)(4, 12, 9)$\\
		$\mathfrak{C}'_{10}$ & $192$ & $ (1, 2, 3, 4, 5, 6, 7)(8, 14, 13, 12, 11, 10, 9)$\\
		$\mathfrak{C}'_{11}$ & $192$ & $(1, 4, 7, 3, 6, 2, 5)(8, 12, 9, 13, 10, 14, 11)$\\
		\hline
	\end{tabular}
\end{center}
Suppose that $\chi_G^{\otimes k}$ and $\chi_H^{\otimes k}$are decomposed into the irreducible characters as follows:
\begin{align*}
	\chi_G^{\otimes k}=d_{G,1}^{(k)} \chi_1+\dots+d_{G,11}^{(k)} \chi_{11} \quad\text{for $G$} \\ \chi_H^{\otimes k}=d_{H,1}^{(k)} \chi_1+\dots+d_{H,11}^{(k)} \chi_{11} \quad\text{for $H$}.
\end{align*}
 We would like to find $d_G^{(k)}$ and $d_H^{(k)}$.
\section{RESULTS}
We follow the argument presented in the paper \cite{yoshiara}. It is known that  $\chi_G(g)$ (resp. $\chi_H(h)$) is  the number of elements which are fixed by $g\in G$ (resp. $h\in H$).
\begin{proposition}\label{pro1}
	We have
	\begin{align*}
		\chi_G&=\chi_1+\chi_8,  \\
		\chi_H&=\chi_1+\chi_4+\chi_7. 
	\end{align*}
\end{proposition}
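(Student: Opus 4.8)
The plan is to compute the two permutation characters explicitly and then read off their irreducible constituents from the given table $\mathbf{X}$ via the orthogonality (inner-product) formula. By hypothesis $\chi_G(g)$ is the number of the eight letters fixed by a representative $g$, and $\chi_H(h)$ the number of the fourteen letters fixed by $h$; so the first step is simply to take each of the eleven listed class representatives, determine its cycle type in the respective natural action, and count fixed points. At the identity this gives $\chi_G(\mathfrak{C}_1)=8$ and $\chi_H(\mathfrak{C}'_1)=14$, which will serve as degree checks at the end.

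Once the eleven values of each permutation character are tabulated, the multiplicities are obtained from
\begin{equation*}
	d^{(1)}_{G,i}=\langle \chi_G,\chi_i\rangle=\frac{1}{|G|}\sum_{j=1}^{11}\abs{\mathfrak{C}_j}\,\chi_G(\mathfrak{C}_j)\,\overline{\chi_i(\mathfrak{C}_j)},
\end{equation*}
and the analogous sum for $H$, using $\abs{G}=\abs{H}=1344$ and the class sizes listed in the tables. The claim is then equivalent to showing that these inner products vanish except for $i\in\{1,8\}$ in the case of $G$ and $i\in\{1,4,7\}$ in the case of $H$, each surviving value being $1$.

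Two structural observations will organize and shorten this. First, both permutation actions are transitive, so by Burnside's orbit-counting lemma the average number of fixed points is the number of orbits, namely $1$; hence $\langle \chi_G,\chi_1\rangle=\langle \chi_H,\chi_1\rangle=1$ and the trivial constituent is pinned down with no further computation. Second, the quantity $\langle \chi_G,\chi_G\rangle$ equals the permutation rank (the number of orbits of $G$ on ordered pairs of letters) and also equals $\sum_i\bigl(d^{(1)}_{G,i}\bigr)^2$. Computing this rank to be $2$ for $G$ forces $\chi_G=\chi_1+\psi$ for a single further irreducible $\psi$ of multiplicity $1$; matching its degree ($8-1=7$) and its remaining class values against $\mathbf{X}$ then identifies $\psi=\chi_8$. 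For $H$ the corresponding rank is $3$, so $\chi_H$ is a sum of three distinct irreducibles each with multiplicity one, and reading off the degrees and a few class values from $\mathbf{X}$ identifies them as $\chi_1,\chi_4,\chi_7$; here the relation $\chi_4(1)+\chi_7(1)=13=14-1$ is a convenient consistency check.

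The main obstacle is bookkeeping rather than conceptual: one must correctly count the eleven fixed-point totals in each action and then evaluate the inner-product sums against the full table $\mathbf{X}$. The only genuine subtlety is that the switch of the $7$th and $8$th columns noted before Proposition~\ref{pro1} must be respected when reading character values, and that the two superficially different lists of class representatives for $G$ and $H$ are genuinely matched to the same columns of the shared table, so that the same table $\mathbf{X}$ legitimately serves both decompositions.
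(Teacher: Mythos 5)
Your argument is correct and reaches the right decompositions, but it takes a different route from the paper. The paper tabulates the same fixed-point values $(8,0,0,0,4,2,0,2,0,1,1)$ and $(14,6,2,6,2,2,0,0,2,0,0)$, views each as $d^{(1)}\mathbf{X}$, and extracts the multiplicities in a single mechanical step, $d_G^{(1)}=\chi_G\mathbf{X}^{-1}=(1,0,0,0,0,0,0,1,0,0,0)$ and $d_H^{(1)}=\chi_H\mathbf{X}^{-1}=(1,0,0,1,0,0,1,0,0,0,0)$; no orthogonality relations, class sizes, transitivity, or rank considerations enter at all. Your route --- Burnside's lemma for the trivial constituent, $\langle\chi,\chi\rangle$ equal to the permutation rank to pin down the number and multiplicities of the remaining constituents, then degree and class-value matching inside $\mathbf{X}$ --- is the more structural one: it yields double transitivity of $G$ (rank $2$) and rank $3$ for $H$ as byproducts, facts the paper only deduces \emph{after} the proposition from the shape of the decomposition, and it avoids inverting an $11\times 11$ matrix.

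One concrete warning about executing your plan with the data as printed: your inner products require class sizes, and the ``Size'' columns of the two conjugacy-class tables are inconsistent with the centralizer orders $|C_G(g)|$ printed in the character table. For instance $\mathfrak{C}_5$, represented by the order-$3$ element $(1,5,2)(3,8,7)$, is listed with size $84$, which would force a centralizer of order $16$ --- impossible, since the order of an element divides the order of its centralizer --- whereas the character table gives $|C_G(g)|=6$, i.e.\ size $224$. If you use the sizes from the first table, your computation for $G$ breaks at the very first step: the Burnside average comes out $1064/1344\neq 1$. You must instead take the sizes $1344/|C_G(g)|$ from the character table itself, with which your claimed values $\langle\chi_G,\chi_1\rangle=1$, $\langle\chi_G,\chi_G\rangle=2$, and $\langle\chi_H,\chi_H\rangle=3$ all check out. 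The paper's $\mathbf{X}^{-1}$ method is immune to this pitfall precisely because it never touches class sizes; that is the one concrete advantage it has over your otherwise shorter and more informative argument.
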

\begin{proof}
	First we know the following.
\begin{center}
	\begin{tabular}{l| c c c c c c c c c c c c}
		
		& $\mathfrak{C}_1$&$\mathfrak{C}_2$&$\mathfrak{C}_6$&$\mathfrak{C}_4$&$\mathfrak{C}_3$&$\mathfrak{C}_5$&$\mathfrak{C}_9$&$\mathfrak{C}_7$&$\mathfrak{C}_8$&$\mathfrak{C}_{10}$&$\mathfrak{C}_{11}$\\
		\hline
		$\chi_{G}$ & 8& 0& 0& 0& 4& 2& 0& 2& 0& 1& 1\\
		\hline
		&$\mathfrak{C}_1'$&$\mathfrak{C}_2'$&$\mathfrak{C}_6'$&$\mathfrak{C}_3'$&$\mathfrak{C}_4'$&$\mathfrak{C}_5'$&$\mathfrak{C}_9'$&$\mathfrak{C}_7'$&$\mathfrak{C}_8'$&$\mathfrak{C}_{10}'$&$\mathfrak{C}_{11}'$\\
		\hline
		$\chi_{H}$ & 14& 6& 2& 6& 2& 2& 0& 0& 2& 0& 0	
	\end{tabular}
\end{center}
For $d_G^{(1)}=\left(d_{G,1}^{(1)}, \dots, d_{G,11}^{(1)}\right)$ we have 
\begin{equation*}
	(8, 0, 0, 0, 4, 2, 0, 2, 0, 1, 1)=\left(d_{G,1}^{(1)}, \dots, d_{G,11}^{(1)}\right)\textbf{X},
\end{equation*}
and for  $d_H^{(1)}=\left(d_{H,1}^{(1)}, \dots, d_{H,11}^{(1)}\right)$ we have 
\begin{equation*}
	(14, 6, 2, 6, 2, 2, 0, 0, 2, 0, 0)=\left(d_{H,1}^{(1)}, \dots, d_{H,11}^{(1)}\right)\textbf{X}.
\end{equation*}
Then we get
\begin{align*}
	d_G^{(1)}&=(8, 0, 0, 0, 4, 2, 0, 2, 0, 1, 1)\textbf{X}^{-1}\\
	&=(1, 0, 0, 0, 0, 0, 0, 1, 0, 0, 0)
\end{align*}
and
\begin{align*}
	d_H^{(1)}&=(14, 6, 2, 6, 2, 2, 0, 0, 2, 0, 0)\textbf{X}^{-1}\\
	&=(1, 0, 0, 1, 0, 0, 1, 0, 0, 0, 0).
\end{align*}
This completes the proof.
\end{proof}
Since $\chi_G$ is  decomposed into the trivial character and one irreducible character, $G$ is doubly transitive. On the other hand, $\chi_H$ is decomposed into the trivial character and two distinct non trivial irreducible characters. Hence $H$ is not doubly transitive,
 see  \cite{wielandt}, \cite{bannai}. Next we  calculate $d_G^{(k)}$ and $d_H^{(k)}$. 

\begin{center}
	\begin{tabular}{l| r r r r r r r r r r r r}
		\hline
		$G$& $\mathfrak{C}_1$&$\mathfrak{C}_2$&$\mathfrak{C}_6$&$\mathfrak{C}_4$&$\mathfrak{C}_3$&$\mathfrak{C}_5$&$\mathfrak{C}_9$&$\mathfrak{C}_7$&$\mathfrak{C}_8$&$\mathfrak{C}_{10}$&$\mathfrak{C}_{11}$\\
		$H$& $\mathfrak{C}_1'$&$\mathfrak{C}_2'$&$\mathfrak{C}_6'$&$\mathfrak{C}_3'$&$\mathfrak{C}_4'$&$\mathfrak{C}_5'$&$\mathfrak{C}_9'$&$\mathfrak{C}_7'$&$\mathfrak{C}_8'$&$\mathfrak{C}_{10}'$&$\mathfrak{C}_{11}'$\\
		\hline

		$\abs{C_G(g)}$ & $1344$& $192$& $16$& $32$& $32$& $6$& $6$& $8$& $8$& $7$& $7$\\
		\hline
		$\chi_1$ & $1$& $1$& $1$& $1$& $1$& $1$& $1$& $1$& $1$& $1$& $1$\\
		
		$\chi_2$ & $3$& $3$& $-1$& $-1$& $0$& $0$& $0$& $1$& $1$& $\dfrac{-1+\sqrt{-7}}{2}$& $\dfrac{-1-\sqrt{-7}}{2}$\\
		
		$\chi_3$ & $3$& $3$& $-1$& $-1$& $0$& $0$& $0$& $1$& $1$& $\dfrac{-1-\sqrt{-7}}{2}$& $\dfrac{-1+\sqrt{-7}}{2}$\\
		
		$\chi_4$ & $6$& $6$& $2$& $2$& $2$& $0$& $0$& $0$& $0$& $-1$& $-1$\\
		
		$\chi_5$ & $7$& $7$& $-1$& $-1$& $-1$& $1$& $1$& $-1$& $-1$& $0$& $0$\\
		
		$\chi_6$ & $8$& $8$& $0$& $0$& $0$& $-1$& $-1$& $0$& $0$& $1$& $1$\\
		
		$\chi_7$ & $7$& $-1$& $-1$& $3$& $-1$& $1$& $-1$& $-1$& $1$& $0$& $0$\\
		
		$\chi_8$ & $7$& $-1$& $-1$& $-1$& $3$& $1$& $-1$& $1$& $-1$& $0$& $0$\\
		
		$\chi_9$ & $14$& $-2$& $-2$& $2$& $2$& $-1$& $1$& $0$& $0$& $0$& $0$\\
		
		$\chi_{10}$ & $21$& $-3$& $1$& $1$& $-3$& $0$& $0$& $1$& $-1$& $0$& $0$\\
		
		$\chi_{11}$ & $21$& $-3$& $1$& $-3$& $1$& $0$& $0$& $-1$& $1$& $0$& $0$\\
		\hline
	\end{tabular}
\end{center}
Consider the following matrix $A_G$ such that
\begin{align*}
\begin{pmatrix}
	\chi_1\chi \\
	\chi_2\chi \\
	\vdots\\
	\chi_{11}\chi 
\end{pmatrix}&=A_G
\begin{pmatrix}
	\chi_1 \\
	\chi_2 \\
	\vdots\\
	\chi_{11} 
\end{pmatrix}.\\
\end{align*}
Then we have
\begin{align*}	
\begin{pmatrix}
	\chi_1(\mathfrak{C}_i)\chi(\mathfrak{C}_i) \\
	\chi_2(\mathfrak{C}_i)\chi(\mathfrak{C}_i) \\
	\vdots\\
	\chi_{11}(\mathfrak{C}_i)\chi(\mathfrak{C}_i) 
\end{pmatrix}&=A_G
\begin{pmatrix}
	\chi_1(\mathfrak{C}_i) \\
	\chi_2(\mathfrak{C}_i) \\
	\vdots\\
	\chi_{11}(\mathfrak{C}_i) 
\end{pmatrix},\\
\end{align*}

\begin{align*}
\begin{pmatrix}
	\chi_1(\mathfrak{C}_1)& \chi_1(\mathfrak{C}_2)&\chi_1(\mathfrak{C}_6)&\dots&\chi_{1}(\mathfrak{C}_{11}) \\
	\chi_2(\mathfrak{C}_1)&\chi_2(\mathfrak{C}_2)&\chi_2(\mathfrak{C}_6)&\dots&\chi_{2}(\mathfrak{C}_{11}) \\
	\chi_3(\mathfrak{C}_1)&\chi_3(\mathfrak{C}_2)&\chi_3(\mathfrak{C}_6)&\dots&\chi_{3}(\mathfrak{C}_{11}) \\	
	\vdots&\vdots&\vdots&\ddots&\vdots\\
	\chi_{11}(\mathfrak{C}_1)&\chi_{11}(\mathfrak{C}_2)&\chi_{11}(\mathfrak{C}_6)&\dots&\chi_{11}(\mathfrak{C}_{11}) \\ 
\end{pmatrix}
\begin{pmatrix}
	\chi (\mathfrak{C}_1)&&&&& \\
	&\chi (\mathfrak{C}_2)&&&&\\
	&&\chi (\mathfrak{C}_6)&&&\\
	&&&\chi (\mathfrak{C}_4)&&\\
	&&&&\ddots&\\
	&&&&&\chi (\mathfrak{C}_{11}) 
\end{pmatrix}&\\
=A_G
\begin{pmatrix}
	\chi_1(\mathfrak{C}_1)\dots\chi_{1}(\mathfrak{C}_{11}) \\
	\vdots\\
	\chi_{11}(\mathfrak{C}_1)\dots\chi_{11}(\mathfrak{C}_{11}) 
\end{pmatrix}.
\end{align*}
And for $A_H$ we get
\begin{align*}
\begin{pmatrix}
	\chi_1(\mathfrak{C}'_1)& \chi_1(\mathfrak{C}'_2)&\chi_1(\mathfrak{C}'_6)&\dots&\chi_{1}(\mathfrak{C}'_{11}) \\
	\chi_2(\mathfrak{C}'_1)&\chi_2(\mathfrak{C}'_2)&\chi_2(\mathfrak{C}'_6)&\dots&\chi_{2}(\mathfrak{C}'_{11}) \\
	\chi_3(\mathfrak{C}'_1)&\chi_3(\mathfrak{C}'_2)&\chi_3(\mathfrak{C}'_6)&\dots&\chi_{3}(\mathfrak{C}'_{11}) \\	
	\vdots&\vdots&\vdots&\ddots&\vdots\\
	\chi_{11}(\mathfrak{C}'_1)&\chi_{11}(\mathfrak{C'}_2)&\chi_{11}(\mathfrak{C}'_6)&\dots&\chi_{11}(\mathfrak{C}'_{11})\\
\end{pmatrix}
\begin{pmatrix}
	\chi (\mathfrak{C}'_1)&&&&& \\
	&\chi (\mathfrak{C}'_2)&&&&\\
	&&\chi (\mathfrak{C}'_6)&&&\\
	&&&\chi (\mathfrak{C}'_3)&&\\
	&&&&\ddots&\\
	&&&&&\chi (\mathfrak{C}'_{11}) 
\end{pmatrix}&\\=A_H
\begin{pmatrix}
	\chi_1(\mathfrak{C}'_1)\dots\chi_{1}(\mathfrak{C}'_{11}) \\
	\vdots\\
	\chi_{11}(\mathfrak{C}'_1)\dots\chi_{11}(\mathfrak{C}'_{11}) 
\end{pmatrix}.
\end{align*}
Thus we have \begin{align*}
	\textbf{X}diag(8, 0, 0, 0, 4, 2, 0, 2, 0, 1, 1)=A_G\textbf{X}\\
	A_G=\textbf{X}diag(8, 0, 0, 0, 4, 2, 0, 2, 0, 1, 1)\textbf{X}^{-1},
\end{align*}
and \begin{align*}
	\textbf{X}diag(14, 6, 2, 6, 2, 2, 0, 0, 2, 0, 0)=A_H\textbf{X}\\
	A_H=\textbf{X}diag(14, 6, 2, 6, 2, 2, 0, 0, 2, 0, 0)\textbf{X}^{-1}.
\end{align*}
Therefore we have 
\begin{align*}
	d_G^{(k)}&=d_G^{k-1}A_G\\
	&=d_G^{(1)}A_G^{k-1}\\
	&=d_G^{(1)}X\left(diag(8, 0, 0, 0, 4, 2, 0, 0, 2, 1, 1)\right)^{k-1}X^{-1}
\end{align*}
Subsequently, we possess
\begin{align*}
		a_{G,k}&=\dfrac{2^{3k}}{1344}+\dfrac{2^{2k}}{32}+\dfrac{7}{24}2^{k}+\dfrac{2}{7},\\ b_{G,k}&=\dfrac{2^{3k}}{448}-\dfrac{2^{2k}}{32}+\dfrac{2^{k}}{8}-\dfrac{1}{7},\\ c_{G,k}&=\dfrac{2^{3k}}{448}-\dfrac{2^{2k}}{32}+\dfrac{2^{k}}{8}-\dfrac{1}{7},\\ d_{G,k}&=\dfrac{2^{3k}}{224}+\dfrac{2^{2k}}{16}-\dfrac{2}{7},\\ e_{G,k}&=\dfrac{2^{3k}}{192}-\dfrac{2^{2k}}{32}+\dfrac{2^{k}}{24},\\
		f_{G,k}&=\dfrac{2^{3k}}{168}-\dfrac{2^k}{6}+\dfrac{2}{7},\\
		g_{G,k}&=\dfrac{2^{3k}}{192}-\dfrac{2^{2k}}{32}+\dfrac{2^{k}}{24},\\
		h_{G,k}&=\dfrac{2^{3k}}{192}+\dfrac{3}{32}2^{2k}+\dfrac{7}{24}2^{k}, \\
		i_{G,k}&=\dfrac{2^{3k}}{96}+\dfrac{2^{2k}}{16}-\dfrac{2^k}{6},\\
		j_{G,k}&=\dfrac{2^{3k}}{64}-\dfrac{3}{32}2^{2k}+\dfrac{2^{k}}{8},\\
		l_{G,k}&=\dfrac{2^{3k}}{64}+\dfrac{2^{2k}}{32}-\dfrac{2^{k}}{8}.
	\end{align*}

and for $\chi_H$
\begin{align*}
	d_H^{(k)}&=d_H^{k-1}A_H\\
	&=d_H^{(1)}A_H^{k-1}\\
	&=d_H^{(1)}X\left(diag(14, 6, 2, 6, 2, 2, 0, 2, 0, 0, 0)	\right)^{k-1}X^{-1}
\end{align*}
Afterwards
\begin{align*}
	a_{H,k}&= 2^k \left( \frac{1}{1344} \cdot 7^k + \frac{7}{192} \cdot 3^k + \frac{37}{96} \right),\\
	b_{H,k}&=2^k \left( \frac{1}{448} \cdot 7^k - \frac{1}{64} \cdot 3^k + \frac{1}{32} \right),\\
	c_{H,k}&=2^k \left( \frac{1}{448} \cdot 7^k - \frac{1}{64} \cdot 3^k + \frac{1}{32} \right), \\
	d_{H,k}&=2^k \left( \frac{1}{224} \cdot 7^k + \frac{3}{32} \cdot 3^k + \frac{3}{16} \right), \\
	e_{H,k}&=2^k \left( \frac{1}{192} \cdot 7^k + \frac{1}{192} \cdot 3^k - \frac{5}{96} \right),\\
	f_{H,k}&=2^k \left( \frac{1}{168} \cdot 7^k + \frac{1}{24} \cdot 3^k - \frac{1}{6} \right),\\
	g_{H,k}&=2^k \left( \frac{1}{192} \cdot 7^k + \frac{17}{192} \cdot 3^k + \frac{19}{96} \right),\\
	h_{H,k}&=2^k \left( \frac{1}{192} \cdot 7^k - \frac{7}{192} \cdot 3^k + \frac{7}{96} \right),\\
	i_{H,k}&=2^k \left( \frac{1}{96} \cdot 7^k + \frac{5}{96} \cdot 3^k - \frac{11}{48} \right),\\
	j_{H,k}&=2^k \left( \frac{1}{64} \cdot 7^k + \frac{1}{64} \cdot 3^k - \frac{5}{32} \right),\\
	l_{H,k}&=2^k \left( \frac{1}{64} \cdot 7^k - \frac{7}{64} \cdot 3^k + \frac{7}{32} \right).
\end{align*}	
\begin{theorem}
	We have
	\begin{align*}
		\mathfrak{A}_G^{(k)}\cong
		\begin{cases}
			2M_1\\
			M_{a_{G,k}}\oplus 2M_{b_{G,k}}\oplus M_{d_{G,k}}\oplus 2M_{e_{G,k}}\oplus M_{f_{G,k}}\oplus M_{h_{G,k}}\oplus M_{i_{G,k}}\oplus M_{j_{G,k}}\oplus M_{l_{G,k}}
		\end{cases}
	\end{align*}
and $\mathfrak{A}_H^{(k)}\cong$ $
\begin{cases}
	3M_1\\
	M_{a_{H,k}}\oplus 2M_{b_{H,k}}\oplus M_{d_{H,k}}\oplus M_{e_{H,k}}\oplus M_{f_{H,k}}\oplus M_{g_{H,k}}\oplus M_{h_{H,k}}\oplus M_{i_{H,k}}\oplus M_{j_{H,k}}\oplus M_{l_{H,k}}
\end{cases}$
where 
\begin{equation*}
	a_{G,k}, b_{G,k}, c_{G,k},d_{G,k},e_{G,k},f_{G,k},g_{G,k},h_{G,k}	,i_{G,k}, j_{G,k}, l_{G,k} 
\end{equation*} and 
\begin{equation*}
	a_{H,k}, b_{H,k}, c_{H,k},d_{H,k},e_{H,k},f_{H,k},g_{H,k},h_{H,k}	,i_{H,k}, j_{H,k}, l_{H,k}
\end{equation*} are given above.
\end{theorem}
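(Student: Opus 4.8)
The plan is to reduce the entire statement to the computation of the multiplicity vectors $d_G^{(k)}$ and $d_H^{(k)}$ that already appear in the text, and then to read off the Wedderburn decomposition from Schur--Weyl reciprocity. Concretely, write $V_G$ for the permutation module affording $\chi_G$ and set $\mathfrak{A}_G^{(k)}=\operatorname{End}_{\mathbb{C}G}\!\left(V_G^{\otimes k}\right)$. By the reciprocity recalled in Section~2, if $\chi_G^{\otimes k}=\sum_{i=1}^{11} d_{G,i}^{(k)}\chi_i$, then
\begin{equation*}
	\mathfrak{A}_G^{(k)}\cong\bigoplus_{i=1}^{11} M_{d_{G,i}^{(k)}}(\mathbb{C}),
\end{equation*}
and likewise for $H$. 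Thus the theorem asserts only that the eleven matrix sizes are the numbers $a_{G,k},\dots,l_{G,k}$ (resp. $a_{H,k},\dots,l_{H,k}$) indexed by $\chi_1,\dots,\chi_{11}$, together with the bookkeeping that collapses repeated sizes into the blocks $2M_{b}$ and $2M_{e}$ (resp. $2M_{b}$) displayed in the statement.

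First I would justify the recursion that produces $d_G^{(k)}$. Multiplication by the fixed character $\chi_G$ is a linear operator on the character ring; in the basis of irreducibles it is the matrix $A_G$ determined by $\chi_i\chi_G=\sum_j (A_G)_{ij}\chi_j$. Evaluating this identity on each class $\mathfrak{C}_c$ and collecting the values into the character table $\mathbf{X}=(\chi_i(\mathfrak{C}_c))$ gives the single matrix identity $\mathbf{X}\,\operatorname{diag}(\chi_G(\mathfrak{C}_c))=A_G\mathbf{X}$, i.e. $A_G=\mathbf{X}\,D_G\,\mathbf{X}^{-1}$ with $D_G=\operatorname{diag}(8,0,0,0,4,2,0,2,0,1,1)$. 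Since $\chi_G^{\otimes k}=\chi_G^{\otimes(k-1)}\!\cdot\chi_G$, the multiplicity row vectors obey $d_G^{(k)}=d_G^{(k-1)}A_G$, whence
\begin{equation*}
	d_G^{(k)}=d_G^{(1)}A_G^{\,k-1}=d_G^{(1)}\,\mathbf{X}\,D_G^{\,k-1}\,\mathbf{X}^{-1},
\end{equation*}
with $d_G^{(1)}$ supplied by Proposition~\ref{pro1}. The same argument with $D_H=\operatorname{diag}(14,6,2,6,2,2,0,0,2,0,0)$ produces $d_H^{(k)}$.

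The heart of the proof is then the explicit evaluation. Because $D_G$ is diagonal, $D_G^{\,k-1}$ has entries $\chi_G(\mathfrak{C}_c)^{k-1}$, and the distinct nonzero values are $\{8,4,2,1\}=\{2^{3},2^{2},2^{1},2^{0}\}$; hence every component of $d_G^{(k)}$ is a fixed rational combination of $2^{3k},2^{2k},2^{k},1$, which is exactly the shape of $a_{G,k},\dots,l_{G,k}$. For $H$ the nonzero values are $\{14,6,2\}=\{2\cdot 7,\,2\cdot 3,\,2\cdot 1\}$, so each component factors as $2^{k}$ times a combination of $7^{k},3^{k},1$, matching $a_{H,k},\dots,l_{H,k}$. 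Carrying out the products $d_G^{(1)}\mathbf{X}$ and then $(\,\cdot\,)\mathbf{X}^{-1}$ (using column orthogonality, equivalently the stated centralizer orders $|C_G(g)|$, to form $\mathbf{X}^{-1}$) yields the coefficients listed before the theorem. The coincidences $c_{G,k}=b_{G,k}$ and $c_{H,k}=b_{H,k}$ are explained \emph{a priori}: $\chi_2$ and $\chi_3$ are complex conjugates while $\chi_G,\chi_H$ are real, so conjugate irreducibles occur with equal multiplicity in every tensor power; the further equality $g_{G,k}=e_{G,k}$ (absent for $H$) is checked directly from the closed forms. These identifications merge $M_{b}\oplus M_{c}$ into $2M_{b}$ and $M_{e}\oplus M_{g}$ into $2M_{e}$.

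Finally I would record the two checks that guarantee correctness. One should verify the base case $k=1$: substituting $k=1$ must return $d_G^{(1)}=(1,0,0,0,0,0,0,1,0,0,0)$ and $d_H^{(1)}=(1,0,0,1,0,0,1,0,0,0,0)$, so that only the trivial blocks survive and $\mathfrak{A}_G^{(1)}\cong 2M_1$, $\mathfrak{A}_H^{(1)}\cong 3M_1$, the first line of each case in the statement. One should also confirm that each closed form takes nonnegative integer values for all $k\ge 1$; in particular the $\sqrt{-7}$ appearing in $\chi_2,\chi_3$ on the classes $\mathfrak{C}_{10},\mathfrak{C}_{11}$ must cancel after multiplication by $\mathbf{X}^{-1}$, which is automatic because $d_G^{(k)}$ is a genuine multiplicity vector. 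The main obstacle is purely computational: inverting the $11\times 11$ table $\mathbf{X}$ over $\mathbb{Q}(\sqrt{-7})$ and tracking which irreducible index carries which of the letters $a,\dots,l$ without error; the representation-theoretic content beyond that is contained in Section~2 and Proposition~\ref{pro1}.
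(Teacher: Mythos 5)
Your proposal is correct and takes essentially the same route as the paper: Schur--Weyl reciprocity reduces the theorem to computing the multiplicity vectors, which both you and the paper obtain from the recursion $d^{(k)}=d^{(1)}A^{k-1}$ with $A=\mathbf{X}\,\mathrm{diag}(\chi(\mathfrak{C}_c))\,\mathbf{X}^{-1}$, diagonalized through the character table, and then read off as the matrix block sizes $a,\dots,l$. Your a priori explanation of the coincidence $b=c$ via the complex-conjugate pair $\chi_2,\chi_3$ is a small refinement over the paper's purely computational verification, but it does not alter the argument.
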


\begin{corollary}\label{cordim}
	We have
	\begin{align*}
		\dim \mathfrak{A}_G^{(k)}&=\frac{1}{1344} \cdot 2^{6k} +  \frac{1}{32} \cdot 2^{4k} + \frac{7}{24} \cdot 2^{2k}+\frac{2}{7}\\
		\intertext{and}
			\dim \mathfrak{A}_H^{(k)}&=2^{2k} \left( \frac{1}{1344} \cdot 7^{2k} + \frac{7}{192} \cdot 3^{2k} + \frac{37}{96} \right).
	\end{align*}
\end{corollary}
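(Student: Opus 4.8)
The plan is to sidestep the sum of squares of the block sizes coming from the preceding theorem and instead read off the dimension directly from the permutation character. For any finite group $\Gamma$ and any representation $W$ with character $\psi$, the standard identity $\dim\operatorname{End}_\Gamma(W)=\langle\psi,\psi\rangle$ holds. Taking $W=V^{\otimes k}$, whose character is the pointwise power $\chi_G^{\otimes k}$, and using that $\chi_G$ is a non-negative integer valued class function, I obtain
\[
\dim\mathfrak{A}_G^{(k)}=\langle\chi_G^{\otimes k},\chi_G^{\otimes k}\rangle=\frac{1}{|G|}\sum_{g\in G}\chi_G(g)^{2k},
\]
and the analogous formula for $H$. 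This is the identity I take as the starting point.

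The key observation is that $\langle\chi_G^{\otimes k},\chi_G^{\otimes k}\rangle=\langle\chi_G^{\otimes 2k},\chi_1\rangle=d_{G,1}^{(2k)}=a_{G,2k}$: the dimension at level $k$ is exactly the trivial-component multiplicity $a_{G,\bullet}$ evaluated at $2k$. Since $a_{G,k}$ is already computed above, I simply substitute $k\mapsto 2k$ into
\[
a_{G,k}=\frac{2^{3k}}{1344}+\frac{2^{2k}}{32}+\frac{7}{24}2^{k}+\frac{2}{7},
\]
which immediately yields the claimed expression for $\dim\mathfrak{A}_G^{(k)}$. The same doubling applied to $a_{H,k}=2^{k}\bigl(\frac{1}{1344}7^{k}+\frac{7}{192}3^{k}+\frac{37}{96}\bigr)$ gives the stated formula for $\dim\mathfrak{A}_H^{(k)}$.

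To keep the argument self-contained I would also spell out the class sum. Grouping $\tfrac{1}{|G|}\sum_g\chi_G(g)^{2k}$ by conjugacy class, only the classes on which $\chi_G\neq0$ contribute: $\chi_G=8$ on the identity, $\chi_G=4$ on a class of size $42$, $\chi_G=2$ on classes of total size $224+168=392$, and $\chi_G=1$ on classes of total size $192+192=384$. Hence $1344\,\dim\mathfrak{A}_G^{(k)}=2^{6k}+42\cdot2^{4k}+392\cdot2^{2k}+384$, which reduces to the asserted formula. For $H$ the surviving values are $\chi_H=14$ on the identity, $\chi_H=6$ on classes of total size $7+42=49$, and $\chi_H=2$ on classes of total size $84+42+224+168=518$, giving $1344\,\dim\mathfrak{A}_H^{(k)}=14^{2k}+49\cdot6^{2k}+518\cdot2^{2k}$, i.e.\ the stated expression after factoring $2^{2k}$.

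The calculation is routine once the framework is in place; the sole point requiring care is that the class sizes entering the weighted sum must be taken consistently with the centralizer orders $|C_G(g)|$ recorded in the character table. I expect no genuine obstacle beyond this bookkeeping, since the heart of the proof is the single line $\dim\operatorname{End}_G(V^{\otimes k})=\langle\chi_G^{\otimes 2k},\chi_1\rangle$, which exhibits the dimension as a specialization of the multiplicity $a_{G,\bullet}$ established earlier.
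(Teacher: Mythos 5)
Your proposal is correct, and it reaches the corollary by a genuinely different route than the paper. The paper deduces the dimension from its Theorem via Wedderburn: $\dim\mathfrak{A}=\sum_i m_i^2$, the square sum of the dimensions of the simple components, using the eleven multiplicity formulas $a_{\bullet,k},b_{\bullet,k},\dots,l_{\bullet,k}$ already computed (this is all the paper says: ``the second assertion \dots is obtained by taking the square sum of the dimensions of the simple components''). You instead use the doubling identity $\dim\operatorname{End}_G(V^{\otimes k})=\langle\chi^{\otimes k},\chi^{\otimes k}\rangle=\langle\chi^{\otimes 2k},\chi_1\rangle=a_{\bullet,2k}$, valid because the permutation character is real-valued; this explains at a glance why the stated dimension formulas are literally the trivial-multiplicity formulas with $k\mapsto 2k$, a structural fact the paper exhibits but never remarks on, and it makes the corollary independent of the full block decomposition (so it doubles as a consistency check on the Theorem rather than a consequence of it). Your explicit class sums are also right, and your cautionary remark about class sizes is well taken: the paper's first table for $G$ is internally inconsistent with its character table (e.g.\ it assigns size $84$ to the order-$3$ class $\mathfrak{C}_5$ and $224$ to the order-$4$ class $\mathfrak{C}_8$, which contradicts the listed centralizer orders $6$ and $8$ and would make $\langle\chi_G,\chi_1\rangle$ non-integral); using the centralizer orders, as you do, gives $1344\dim\mathfrak{A}_G^{(k)}=2^{6k}+42\cdot2^{4k}+392\cdot2^{2k}+384$ and $1344\dim\mathfrak{A}_H^{(k)}=14^{2k}+49\cdot6^{2k}+518\cdot2^{2k}$, which agree with the stated formulas and with the table values ($2$, $3$ at $k=1$).
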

The second assertion of Corollary \ref{cordim} is obtained by taking the square sum of the
dimensions of the simple components. We conclude this paper with a small
table of dim $\mathfrak{A}$.

\begin{tabular}{c| c c c c c c }
	
	&$1$&$2$&$3$&$4$&$5$&$6$\\
	\hline
	  $\dim \mathfrak{A}_G^{(k)}$& 2& 16& 342& 14606&831982&51656046	\\
	  $\dim \mathfrak{A}_H^{(k)}$ & 3& 82& 7328& 1159392&217424128&42262333952\\
	\hline
\end{tabular}

\vspace{1.5cc}
\begin{center}
ACKNOWLEDGMENTS
\end{center}
 This work was supported
 by JSPS KAKENHI Grant Numbers  24K06827 and 24K06644. The third named author of this work was supported in part by Ministry of Religious Affairs (BIB) and the
 Indonesia Endowment Fund for Education (LPDP) of the Ministry of Finance of the Republic
 of Indonesia.

\vspace{0.2cc}

 Faculty of Engineering, University of Yamanashi, 400-8511, Japan\\
\textit{email address}: mkosuda@yamanashi.ac.jp\\

Institute of Science and Engineering, Kanazawa University, Ishikawa, 920-1192, Japan\\
\textit{email address}: oura@se.kanazawa-u.ac.jp\\

Department of Mathematics, Universitas Islam Negeri Sultan Syarif Kasim Riau, Indonesia\\
\textit{and}\\

Graduate School of Natural Science and Technology, Kanazawa University, Ishikawa, 920-1192, Japan \\
\textit{email address}: sarbaini@stu.kanazawa-u.ac.jp

\end{document}